\newtheorem{theorem}{Theorem}[section]
\newtheorem{proposition}[theorem]{Proposition}
\newtheorem{corollary}[theorem]{Corollary}
\newtheoremstyle{definition}
  {6pt}
  {6pt}
  {}
  {}
  {\bfseries}
  {.}
  {.5em}
  {}%
\theoremstyle{definition}
\newtheoremstyle{remark}
  {6pt}
  {6pt}
  {}
  {}
  {\bfseries}
  {.}
  {.5em}
  {}%
\theoremstyle{remark}
\newtheorem{remark}[theorem]{Remark}
\newtheorem*{thr}{Main theorem}
\renewcommand\@makefntext[1]{%
\setlength\parindent{1em}%
\noindent

\makebox[1.8em][r]{}{#1}}
\begin{document}
\title{\bf  \large THE STRUCTURE OF PERFECT FIELDS }
\author{
 \centerline{\small Duong Quoc Viet and Truong Thi Hong Thanh }\\}
   \date{}
\maketitle
\centerline{\parbox[c]{10.45 cm}{
\small {\bf ABSTRACT:}  This paper  builds   fundamental  perfect fields  of  positive  characteristic and shows  the structure of   perfect fields that  a field of  positive  characteristic is a  perfect field if and only if it is an algebraic extension of a fundamental perfect field. }}

\section{Introduction}
It has long been known that perfect fields are significant because Galois theory over these fields becomes simpler,  since the separable condition (the  most  important condition
of  Galois  extensions)  of  algebraic extensions over perfect fields are automatically satisfied.
Recall that  a  field $k$ is said to be a {\it  perfect  field}  if any one of the following equivalent conditions holds:
 \begin{enumerate}[\rm (a)]

 \item Every irreducible polynomial over $k$ is separable.
 \item  Every finite-degree   extension of $k$ is separable.
 \item  Every algebraic extension of $k$ is separable.
 \end{enumerate}

One showed  that  any field of  characteristic $0$ is a perfect field. And
 any field $k$  of  characteristic $p > 0$ is a perfect field if and only if  the Frobenius endomorphism $x \mapsto x^p$ is an automorphism of $k;$ i.e., every element of $k$ is a  $p$-th power (see e.g. \cite {BN,  M,  BO, S}).
In this paper, we   show  the structure of  perfect fields  of  positive  characteristic.

Let $p$ be a positive prime and    $X = \{x_i\}_{i \in I}$ a set of independent variables.
Denote by $\mathbb{Z}_p$  the field of congruence classes modulo $p$ and
$\mathbb{Z}_p(X)$  the field   of all rational functions in $X.$ For any  $u \in  \mathbb{Z}_p(X)$ and any $n \geq 0,$   we  define
$\sqrt[p^n]{u}$ to be the unique  root of $ y^{p^n} - u \in \mathbb{Z}_p(X)[y] $ in an algebraic closure $\overline{\mathbb{Z}_p(X)}$ of     $\mathbb{Z}_p(X).$
Set  $\sqrt[p^n]{X} = \{\sqrt[p^n]{x_i}\}_{i \in I}$ for all $n \ge 0.$
 It is clear that
 $\mathbb{Z}_p(\sqrt[p^n]{X}) \subseteq \mathbb{Z}_p(\sqrt[p^{n+1}]{X})$ for all $n \ge 0.$
Then we prove that   $F_p(I) = \bigcup_{n \ge 0}  \mathbb{Z}_p(\sqrt[p^n]{X})$ is  a perfect closure of  $\mathbb{Z}_p(X)$ (see  Proposition \ref{lm21}).
 Note that  we assign $F_p(I) = \mathbb{Z}_p$
   to  the case that $I = \emptyset$,   and  call a field $k$  of  characteristic $p > 0$ is a {\it fundamental perfect field} if $k \cong F_p(I)$ for some $I.$

\footnotetext{\begin{itemize}
\item[ ]{\it Mathematics  Subject
Classification}(2010): Primary  12F05.    Secondary 12F10; 12F15;
12F20. \item[ ]{\it Key words and phrases}: Perfect  fields,
Galois theory, fields of  positive  characteristic.
\end{itemize}}

 As one might expect,  we  obtain the following result for  the structure of  perfect fields  of  positive  characteristic.

\begin{thr}[see Theorem \ref{th2.2}] {\it  A field of  positive  characteristic is a  perfect field if and only if it is an algebraic extension of a fundamental perfect field}.
\end{thr}

The remainder of the paper (Section 2) is  devoted  to  the discussion of fundamental  perfect fields (see Proposition \ref{lm21}), and  proves  the main theorem.  Moreover, we  obtain other  interesting information on maximal fundamental  perfect  subfields of perfect fields (see  Corollary  \ref{co2.3} and Proposition \ref{lm2.4}).

\section{Fundamental perfect fields }
In this section, we   will  build  fundamental  perfect  fields  of  positive  characteristic and show  the structure of   perfect fields.

Recall that  a {\it perfect closure} $F$  of  a  field $E$  is  a  smallest  perfect  extension of $E$ in  an algebraic closure $\overline{E}$ of  $E;$  i.e., $F \subseteq F'$ for all  $F'$  perfect and  $ E \subseteq F'  \subseteq \overline{E}.$
 Let $p$ be a positive  prime.
Denote by  $\mathbb{Z}_p$  the field of congruence classes modulo $p.$
And throughout this paper,  we  always  consider  $\mathbb{Z}_p$ as a  subfield  of fields of   characteristic $p.$

\begin{remark} \label{lm20} Let  $H$ be a field  of   characteristic $p$
 and $a \in H.$  Then for any $n \ge 0, $  the  polynomial  $x^{p^n} - a$  has a   unique root in  an algebraic closure of $H$ and denote by
$\sqrt[p^n]{a}$ this unique root. And if   $H$  is  perfect  then
     $\sqrt[p^n]{a} \in H.$
\end{remark}

Let   $X = \{x_i\}_{i \in I}$  be a set of independent variables and  assume that  $\mathbb{Z}_p(X)$ is  the field   of all rational functions in $X$  over   $\mathbb{Z}_p.$
For any  $u \in \mathbb{Z}_p(X)$ and any $n \ge 0$,   denote  by
$\sqrt[p^n]{u}$ the  root of  $ y^{p^n} - u $ in an algebraic closure $\overline{\mathbb{Z}_p(X)}$ of     $\mathbb{Z}_p(X).$  Set  $\sqrt[p^n]{X} = \{\sqrt[p^n]{x_i}\}_{i \in I}$ for all $n \ge 0.$
It is clear that   $\sqrt[p^{n}]{X} = (\sqrt[p^{n+1}]{X})^p = \{  (\sqrt[p^{n+1}]{x_i})^p\}_{i \in I}$
 and
  $\mathbb{Z}_p(\sqrt[p^n]{X}) \subseteq \mathbb{Z}_p(\sqrt[p^{n+1}]{X})$ for all $n \ge 0.$ Consequently,
 $F_p(I) = \bigcup_{n \ge 0}  \mathbb{Z}_p(\sqrt[p^n]{X})$ is  a  subfield of $\overline{\mathbb{Z}_p(X)}.$
 Now,  we assign $F_p(I) = \mathbb{Z}_p$
   to  the case that $I = \emptyset.$
      Note that the structure of
$F_p(I)$  only depends  on the cardinality  of  $I;$ i.e., if  $ |I| = |J|$ then  $F_p(I) \cong F_p(J).$

   Then we have the following result.
\begin{proposition} \label{lm21} $F_p(I)$ is a perfect closure of  $\mathbb{Z}_p(X).$
\end{proposition}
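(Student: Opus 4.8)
The plan is to verify two things: first, that $F_p(I)$ is a perfect field, and second, that it is contained in every perfect subfield of $\overline{\mathbb{Z}_p(X)}$ that contains $\mathbb{Z}_p(X)$, which gives minimality. For perfectness, the criterion to use is the one recalled in the introduction: a field of characteristic $p$ is perfect iff the Frobenius $x \mapsto x^p$ is surjective. So I would take an arbitrary element $a \in F_p(I)$ and show it has a $p$-th root in $F_p(I)$. By definition of the union, $a \in \mathbb{Z}_p(\sqrt[p^n]{X})$ for some $n \ge 0$; that is, $a$ is a rational function in the variables $\sqrt[p^n]{x_i}$ with coefficients in $\mathbb{Z}_p$. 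The key computation is then that $\sqrt[p^n]{x_i} = (\sqrt[p^{n+1}]{x_i})^p$ (already noted in the excerpt) and that Frobenius fixes $\mathbb{Z}_p$ pointwise (Fermat's little theorem), so the "same" rational expression evaluated at the variables $\sqrt[p^{n+1}]{x_i}$ is a $p$-th root of $a$ lying in $\mathbb{Z}_p(\sqrt[p^{n+1}]{X}) \subseteq F_p(I)$. Here one must be mildly careful: a priori $a = f/g$ and one wants the $p$-th root to be $\tilde f/\tilde g$ where $\tilde f, \tilde g$ are obtained by substituting $\sqrt[p^{n+1}]{x_i}$ for $\sqrt[p^n]{x_i}$; because the Frobenius on $\mathbb{Z}_p(\sqrt[p^{n+1}]{X})$ is a ring homomorphism carrying each $\sqrt[p^{n+1}]{x_i}$ to $\sqrt[p^n]{x_i}$ and fixing $\mathbb{Z}_p$, its image is exactly $\mathbb{Z}_p(\sqrt[p^n]{X})$, so every $a$ in the latter is hit. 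This establishes that $F_p(I)$ is perfect.

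Next I would prove minimality. Let $F'$ be any perfect field with $\mathbb{Z}_p(X) \subseteq F' \subseteq \overline{\mathbb{Z}_p(X)}$. I claim $\mathbb{Z}_p(\sqrt[p^n]{X}) \subseteq F'$ for every $n$, which by taking the union gives $F_p(I) \subseteq F'$. I argue by induction on $n$. The base case $n = 0$ is the hypothesis $\mathbb{Z}_p(X) \subseteq F'$. For the inductive step, assume $\mathbb{Z}_p(\sqrt[p^n]{X}) \subseteq F'$. Since $F'$ is perfect, Remark \ref{lm20} tells us that for each $i$ the element $\sqrt[p^{n+1}]{x_i}$ — which is the unique root of $y^p - \sqrt[p^n]{x_i}$ — lies in $F'$ (applying the remark with $a = \sqrt[p^n]{x_i} \in F'$ and exponent $1$; uniqueness of $p^m$-th roots also guarantees the element named $\sqrt[p^{n+1}]{x_i}$ inside $\overline{\mathbb{Z}_p(X)}$ coincides with the one produced in $F'$). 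Hence all generators $\sqrt[p^{n+1}]{x_i}$ lie in $F'$, so $\mathbb{Z}_p(\sqrt[p^{n+1}]{X}) \subseteq F'$, completing the induction. Therefore $F_p(I) \subseteq F'$, and since $F_p(I)$ is itself perfect and sits between $\mathbb{Z}_p(X)$ and $\overline{\mathbb{Z}_p(X)}$, it is the smallest such field, i.e. a perfect closure of $\mathbb{Z}_p(X)$.

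Finally I would dispatch the degenerate case $I = \emptyset$ separately: then $F_p(I) = \mathbb{Z}_p$ by convention, and $\mathbb{Z}_p$ is finite hence perfect, and it is trivially the smallest perfect extension of itself inside $\overline{\mathbb{Z}_p}$. The main obstacle I anticipate is purely bookkeeping rather than conceptual: making sure the "substitute $\sqrt[p^{n+1}]{x_i}$ for $\sqrt[p^n]{x_i}$" operation in the surjectivity argument is phrased correctly as the inverse of the Frobenius on the intermediate field (so that no hidden denominators or coefficient issues arise), and confirming that the roots named $\sqrt[p^n]{x_i}$ are compatible across $n$ in the fixed algebraic closure, which the identity $\sqrt[p^{n}]{X} = (\sqrt[p^{n+1}]{X})^p$ already records. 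Everything else is a direct application of the Frobenius-surjectivity characterization of perfectness together with Remark \ref{lm20}.
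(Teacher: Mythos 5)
Your proposal is correct and follows essentially the same route as the paper: perfectness is shown by lifting an element of $\mathbb{Z}_p(\sqrt[p^n]{X})$ to a $p$-th root in $\mathbb{Z}_p(\sqrt[p^{n+1}]{X})$ (the paper does this by explicit polynomial manipulation using that $\mathbb{Z}_p$ is perfect, you by noting the Frobenius image is the whole subfield, which is the same observation), and minimality by applying Remark \ref{lm20} to any intermediate perfect field (the paper takes $p^n$-th roots of the $x_i$ directly, you induct on $n$ with $p$-th roots, a cosmetic difference).
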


\begin{proof} Let $a$ be an element of $F_p(I).$  There  exists $n$ such that $a \in \mathbb{Z}_p(\sqrt[p^n]{X}).$
 Hence $$a = \frac{g(\sqrt[p^n]{x_1},\ldots, \sqrt[p^n]{x_d} )}{h(\sqrt[p^n]{x_1},\ldots, \sqrt[p^n]{x_d} )}$$ for
$g(y_1, \ldots, y_d)$;  $h(y_1, \ldots, y_d)$ are two  polynomials of the  polynomial  ring  $ \mathbb{Z}_p[y_1, \ldots, y_d]$ in $y_1, \ldots, y_d$ over  $\mathbb{Z}_p.$  Since  $\mathbb{Z}_p$ is perfect,  it follows that  $$g(y_1^p, \ldots, y_d^p) = u(y_1, \ldots, y_d)^p \;\;\; \text{ and}\;\;\;  h(y_1^p, \ldots, y_d^p) = v(y_1, \ldots, y_d)^p$$ for $u(y_1, \ldots, y_d); v(y_1, \ldots, y_d) \in \mathbb{Z}_p[y_1, \ldots, y_d].$
Consequently,  we get $$a = \frac{g(\sqrt[p^n]{x_1},\ldots, \sqrt[p^n]{x_d} )}{h(\sqrt[p^n]{x_1},\ldots, \sqrt[p^n]{x_d} )}= \frac{g((\sqrt[p^{n+1}]{x_1})^p,\ldots, (\sqrt[p^{n+1}]{x_d})^p )}{h((\sqrt[p^{n+1}]{x_1})^p,\ldots, (\sqrt[p^{n+1}]{x_d})^p )} = \frac{u(\sqrt[p^{n+1}]{x_1},\ldots, \sqrt[p^{n+1}]{x_d} )^p}{v(\sqrt[p^{n+1}]{x_1},\ldots, \sqrt[p^{n+1}]{x_d} )^p}. $$
 Hence $a = b^p$ for $b = \frac{u(\sqrt[p^{n+1}]{x_1},\ldots, \sqrt[p^{n+1}]{x_d} )}{v(\sqrt[p^{n+1}]{x_1},\ldots, \sqrt[p^{n+1}]{x_d} )} \in \mathbb{Z}_p(\sqrt[p^{n+1}]{X}) \subseteq F_p(I).$
So $F_p(I)$ is perfect. Now, assume that $H$ is a perfect field and $ \mathbb{Z}_p(X) \subseteq H \subseteq \overline{\mathbb{Z}_p(X)}.$  Since $X \subseteq H$,  it follows   that  $ \sqrt[p^n]{X} \subseteq H$ for all $n \ge 0$ by  Remark \ref{lm20}.
Hence  $F_p(I) \subseteq H. $  Thus, $F_p(I)$ is a perfect closure of  $\mathbb{Z}_p(X).$
\end{proof}

We call a field $k$  of  characteristic $p > 0$ is a {\it fundamental perfect field} if $k \cong F_p(I)$ for some $I.$
Then we  obtain the  following interesting result.

\begin{theorem}\label{th2.2} A field of  positive  characteristic is a  perfect field if and only if it is an algebraic extension of a fundamental perfect field.
\end{theorem}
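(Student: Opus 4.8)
The plan is to prove the two implications separately. For the ``if'' direction I will use the standard fact that an algebraic extension of a perfect field is again perfect; since $F_p(I)$ is perfect by Proposition \ref{lm21}, any algebraic extension of a fundamental perfect field is then perfect. For the ``only if'' direction the idea is: given a perfect field $k$ of characteristic $p$, choose a transcendence basis $X=\{x_i\}_{i\in I}$ of $k$ over its prime subfield $\mathbb{Z}_p$, so that $\mathbb{Z}_p(X)\subseteq k$ and $k$ is algebraic over $\mathbb{Z}_p(X)$; then locate the perfect closure $F_p(I)$ of $\mathbb{Z}_p(X)$ inside $k$, so that $k$ is algebraic over the fundamental perfect field $F_p(I)$.

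For the ``if'' direction, let $k$ be an algebraic extension of a fundamental perfect field $F\cong F_p(I)$; then $F$ is perfect by Proposition \ref{lm21}. To see $k$ is perfect it suffices to show the Frobenius is surjective on $k$. Given $\alpha\in k$ with minimal polynomial $f(x)=\sum a_ix^i$ over $F$, write $a_i=b_i^{\,p}$ (possible since $F$ is perfect) and set $g(x)=\sum b_ix^i\in F[x]$, so that $g(x)^p=f(x^p)$ and $\deg g=\deg f$. If $\beta$ is a $p$-th root of $\alpha$ in $\overline{k}$, then $g(\beta)^p=f(\beta^p)=f(\alpha)=0$, hence $g(\beta)=0$; since $\deg g=\deg f$ and $\alpha=\beta^p\in F(\beta)$, one gets $F(\beta)=F(\alpha)\subseteq k$, so $\beta\in k$. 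Thus every element of $k$ is a $p$-th power and $k$ is perfect. (Alternatively one may simply invoke the folklore fact that algebraic extensions of perfect fields are perfect, but I would prefer to record the short computation above.)

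For the ``only if'' direction, let $k$ be perfect of characteristic $p$ and let $X=\{x_i\}_{i\in I}$ be a transcendence basis of $k$ over $\mathbb{Z}_p$, so $\mathbb{Z}_p(X)\subseteq k$ with $k/\mathbb{Z}_p(X)$ algebraic. Fix an algebraic closure $\overline{k}$ of $k$; since $k$ is algebraic over $\mathbb{Z}_p(X)$, $\overline{k}$ is also an algebraic closure of $\mathbb{Z}_p(X)$, so we may take $\overline{\mathbb{Z}_p(X)}=\overline{k}$ and form $F_p(I)=\bigcup_{n\ge 0}\mathbb{Z}_p(\sqrt[p^n]{X})\subseteq\overline{k}$ exactly as in the construction preceding Proposition \ref{lm21}. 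Now $\mathbb{Z}_p(X)\subseteq k\subseteq\overline{k}$ and $k$ is perfect, so by Proposition \ref{lm21} (the perfect closure $F_p(I)$ is the smallest perfect field between $\mathbb{Z}_p(X)$ and its algebraic closure) we have $F_p(I)\subseteq k$. Since $\mathbb{Z}_p(X)\subseteq F_p(I)$ and $k$ is algebraic over $\mathbb{Z}_p(X)$, it follows that $k$ is algebraic over $F_p(I)$, which is a fundamental perfect field. When $k$ is algebraic over $\mathbb{Z}_p$ this degenerates to $I=\emptyset$ and $F_p(\emptyset)=\mathbb{Z}_p$.

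The argument is short once Proposition \ref{lm21} is available; the points needing care are (i) the existence of a transcendence basis of $k$ over $\mathbb{Z}_p$ (routine, via Zorn's lemma in the infinite case), and (ii) the fact that $F_p(I)$ was defined inside one fixed algebraic closure of $\mathbb{Z}_p(X)$, so to apply Proposition \ref{lm21} one must first arrange that algebraic closure to contain $k$ — which is the only subtle step, handled by choosing $\overline{\mathbb{Z}_p(X)}=\overline{k}$. The one genuinely non-formal ingredient is the fact used in the ``if'' direction that algebraic extensions of perfect fields are perfect, and I expect the Frobenius computation above (rather than its proof being the obstacle) to be the main thing to get right cleanly.
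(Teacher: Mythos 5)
Your proposal is correct and follows essentially the same route as the paper: the ``only if'' direction takes a transcendence base of $k$ over $\mathbb{Z}_p$ and locates the perfect closure $F_p(I)$ of $\mathbb{Z}_p(X)$ inside $k$ via Proposition \ref{lm21}, and the ``if'' direction rests on the standard fact that an algebraic extension of a perfect field is perfect. The only differences are cosmetic: you verify that fact by the Frobenius/minimal-polynomial computation where the paper argues via separability of towers, and you build $F_p(I)$ directly inside $\overline{k}$ instead of transferring a perfect closure of $\mathbb{Z}_p(S)$ through an isomorphism and citing uniqueness of perfect closures.
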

\begin{proof}
Let $E$ be a  perfect field of  positive  characteristic $p.$   Assume that
$S = \{s_i\}_{i \in I}$ is a transcendence    base  of $E$  over   $\mathbb{Z}_p.$ Then it is easily seen that $E$ is algebraic over $\mathbb{Z}_p(S).$  Let   $X = \{x_i\}_{i \in I}$  be a set of independent variables.  Since $S = \{s_i\}_{i \in I}$ is  algebraically independent   over   $\mathbb{Z}_p,$ it  follows  that $\mathbb{Z}_p(S) \cong \mathbb{Z}_p(X).$
Let  $F$ be a  perfect closure of $\mathbb{Z}_p(S).$  Since $E$ is  a  perfect field, we can consider $F \subseteq E.$
Remember that $F_p(I)$ is a perfect closure of  $\mathbb{Z}_p(X)$
by Proposition \ref{lm21}
and  $\mathbb{Z}_p(S) \cong \mathbb{Z}_p(X),$
$F \cong  F_p(I)$ by \cite {BN}.    So $F$ is a  fundamental perfect field. And since $E$ is algebraic over $\mathbb{Z}_p(S),$ $E$ is algebraic over $F.$  Conversely,  if  $K$  is an  algebraic extension of a  fundamental perfect field $k$  then $K$ is perfect. Indeed, for  any algebraic extension $H$ of $K,$ $H$ is an  algebraic extension of $k.$
 Since $k$ is perfect,  $H$ is  separable over $k.$  Hence  $H$ is  separable over $K.$  Thus, $K$ is perfect.
The proof is complete.
\end{proof}

 A  subfield $F$ of a perfect  field $P$ is called a  {\it maximal  fundamental  perfect  subfield} of $P$  if $F$ is  a  fundamental  perfect  field and  $P$ is an algebraic extension of  $F.$

 Then from the proof of Theorem  \ref {th2.2}, we find  the following corollary.

 \begin{corollary}\label{co2.3} Let $P$ be a perfect  field  of positive  characteristic $p.$ Then the following statements hold.
 \begin{enumerate}[\rm (i)]

 \item   $Q$ is  a  maximal  fundamental  perfect  subfield of $P$  if and only if there exists is a transcendence  base  $S = \{s_i\}_{i \in I}$  of $P$  over   $\mathbb{Z}_p$  such that   $Q$ is  the  perfect closure  of  $\mathbb{Z}_p(S)$ in $P.$ In this case,  $Q \cong F_p(I).$

 \item  The maximal  fundamental  perfect  subfields  of  $P$ are  isomorphic.

 \item  The structure of   maximal  fundamental  perfect  subfields of $P$   only depends  on the cardinality  of  transcendence  bases  of  $P$   over   $\mathbb{Z}_p.$

 \end{enumerate}
 \end{corollary}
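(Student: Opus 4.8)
The plan is to harvest everything from the proof of Theorem~\ref{th2.2}, supplemented by the uniqueness of perfect closures (as used there via \cite{BN}) and the invariance of transcendence degree; I would organize the argument around part (i), from which (ii) and (iii) follow formally. For the ``if'' half of (i): suppose $S = \{s_i\}_{i \in I}$ is a transcendence base of $P$ over $\mathbb{Z}_p$, and let $X = \{x_i\}_{i \in I}$ be a set of independent variables, so that $\mathbb{Z}_p(S) \cong \mathbb{Z}_p(X)$. Then every perfect closure of $\mathbb{Z}_p(S)$ is isomorphic to a perfect closure of $\mathbb{Z}_p(X)$, which is $F_p(I)$ by Proposition~\ref{lm21}. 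Since $P$ is perfect, the perfect closure $Q$ of $\mathbb{Z}_p(S)$ formed inside $P$ exists (it is $\bigcup_{n \ge 0}\mathbb{Z}_p(\sqrt[p^n]{S})$) and $Q \cong F_p(I)$, so $Q$ is a fundamental perfect field; moreover $P$ is algebraic over $\mathbb{Z}_p(S) \subseteq Q$, hence algebraic over $Q$, so $Q$ is a maximal fundamental perfect subfield of $P$ with $Q \cong F_p(I)$, as asserted.

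For the ``only if'' half of (i): suppose $Q$ is a maximal fundamental perfect subfield of $P$, so there is an isomorphism $\varphi \colon F_p(J) \to Q$ for some index set $J$, and $P$ is algebraic over $Q$. Put $X = \{x_j\}_{j \in J}$, $s_j = \varphi(x_j)$, and $S = \{s_j\}_{j \in J}$. Since $\varphi$ fixes the prime field $\mathbb{Z}_p$, it restricts to an isomorphism $\mathbb{Z}_p(X) \cong \mathbb{Z}_p(S)$, so $S$ is algebraically independent over $\mathbb{Z}_p$; and because $F_p(J)$ is algebraic over $\mathbb{Z}_p(X)$ (it lies in $\overline{\mathbb{Z}_p(X)}$), $Q$ is algebraic over $\mathbb{Z}_p(S)$, hence $P$ is algebraic over $\mathbb{Z}_p(S)$. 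Thus $S$ is a transcendence base of $P$ over $\mathbb{Z}_p$. It remains to identify $Q$ with the perfect closure of $\mathbb{Z}_p(S)$ in $P$: since $Q$ is algebraic over $\mathbb{Z}_p(S)$, an algebraic closure $\overline{\mathbb{Z}_p(S)}$ taken inside $\overline{P}$ is also an algebraic closure of $Q$, so $\varphi$ extends to an isomorphism $\overline{\mathbb{Z}_p(X)} \cong \overline{\mathbb{Z}_p(S)}$; this extension carries the perfect closure of $\mathbb{Z}_p(X)$, namely $F_p(J)$ by Proposition~\ref{lm21}, onto the perfect closure of $\mathbb{Z}_p(S)$, and it carries $F_p(J)$ onto $Q$. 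Hence $Q$ is the perfect closure of $\mathbb{Z}_p(S)$, and since $Q \subseteq P$ this is also the perfect closure in $P$; finally $Q \cong F_p(J)$, which, the index set of $S$ having cardinality $|J|$, gives $Q \cong F_p(I)$ in the notation of the statement.

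Parts (ii) and (iii) are then immediate. Given maximal fundamental perfect subfields $Q_1, Q_2$ of $P$, part (i) yields transcendence bases of $P$ over $\mathbb{Z}_p$ indexed by sets $I_1, I_2$ with $Q_\ell \cong F_p(I_\ell)$; since any two transcendence bases of $P$ over $\mathbb{Z}_p$ have the same cardinality, $|I_1| = |I_2|$, and since the structure of $F_p(I)$ depends only on $|I|$, we get $Q_1 \cong Q_2$, proving (ii). Statement (iii) then records that this common isomorphism type is $F_p(I)$ for any $I$ with $|I|$ equal to the transcendence degree of $P$ over $\mathbb{Z}_p$.

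The step I expect to be the main obstacle is the reconciliation, in the ``only if'' direction of (i), of the \emph{intrinsic} perfect closure of $\mathbb{Z}_p(S)$ computed inside $P$ with the image under an \emph{abstract} isomorphism of $F_p(J)$: one must know that a perfect closure is not merely unique up to isomorphism but is the unique minimal perfect subfield containing the base field inside any ambient algebraically closed field, and that an isomorphism of base fields extends to their algebraic closures and respects this construction. Once this is in place, everything else is routine bookkeeping with transcendence bases together with Proposition~\ref{lm21}.
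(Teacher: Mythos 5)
Your proposal is correct and follows essentially the same route as the paper: prove (i) by unwinding the definition of a maximal fundamental perfect subfield via Proposition \ref{lm21} and the argument of Theorem \ref{th2.2}, then deduce (ii) and (iii) from the invariance of transcendence degree over $\mathbb{Z}_p$ together with the fact that $F_p(I)\cong F_p(J)$ when $|I|=|J|$. The only difference is that you spell out carefully (via the isomorphism $\varphi$ and the uniqueness of the perfect closure inside a fixed algebraic closure) the step the paper disposes of by simply citing ``the proof of Theorem \ref{th2.2}.''
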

\begin{proof} $Q$ is  a  maximal  fundamental  perfect  subfield of $P$  if and only if $Q \cong F_p(I)$ for some $I$  and $P$ is an algebraic extension of  $Q.$ On the one hand,  $Q \cong F_p(I)$
if and only if  there exists is a transcendence  base  $S = \{s_i\}_{i \in I}$  of $Q$  over   $\mathbb{Z}_p$  such that   $Q$ is  the  perfect closure  of  $\mathbb{Z}_p(S)$ in $P$  by the proof of  Theorem  \ref {th2.2}. On the other hand,
 $P$ is an algebraic extension of  $Q$ if and only if  $S = \{s_i\}_{i \in I}$ is  a transcendence base of $P$ over $\mathbb{Z}_p.$ So we get (i). Since  if  $ |I| = |J|$ then $F_p(I) \cong F_p(J),$
  hence from the proof of (i) and  remember  that  the cardinalities  of  transcendence  bases of  $P$    over   $\mathbb{Z}_p$ are the same, we have (ii).
         From (i) and  the proof of (ii) we get (iii).
\end{proof}

Finally, we  would like to  give the following proposition for perfect closures of fields.

\begin{proposition} \label{lm2.4}  Let $E$  be a  field  of positive  characteristic $p$ and
$S = \{s_i\}_{i \in I}$  a  transcendence  base  of  $E$  over   $\mathbb{Z}_p.$  Let $V = \{v_j\}_{j \in J}$  be  a  base for    the vector  space $E$ over $\mathbb{Z}_p(S)$ and $Q$ a  perfect closure of  $\mathbb{Z}_p(S)$ in an algebraic closure $\overline{E}$ of $E.$  Then $Q(V)$ is a  perfect closure of $E,$ and $E$ is perfect if and only if $Q\subseteq E.$
\end{proposition}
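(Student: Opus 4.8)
The plan is to establish the two assertions of Proposition \ref{lm2.4} separately, beginning with the claim that $Q(V)$ is a perfect closure of $E$. First I would observe that since $Q$ is perfect and $V \subseteq E \subseteq \overline{E}$ consists of algebraic elements over $\mathbb{Z}_p(S) \subseteq Q$, the field $Q(V)$ is algebraic over $Q$; hence by Theorem \ref{th2.2} (or directly, as in its proof, using that an algebraic extension of a perfect field is perfect) $Q(V)$ is a perfect field lying between $E$ and $\overline{E}$. To see it is the \emph{smallest} such, I would take any perfect field $F$ with $E \subseteq F \subseteq \overline{E}$. Then $\mathbb{Z}_p(S) \subseteq E \subseteq F$, and since $Q$ is a perfect closure of $\mathbb{Z}_p(S)$ in $\overline{E}$ we get $Q \subseteq F$ (here I would invoke that the perfect closure is the intersection of all perfect intermediate fields, exactly as in the proof of Proposition \ref{lm21}: every $p^n$-th root of an element of $\mathbb{Z}_p(S)$ must lie in any perfect overfield by Remark \ref{lm20}). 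Also $V \subseteq E \subseteq F$, so $Q(V) \subseteq F$, which gives minimality. Thus $Q(V)$ is a perfect closure of $E$.

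For the second assertion, the easy direction is that if $Q \subseteq E$ then $E$ is perfect: since $S$ is a transcendence base, $E$ is algebraic over $\mathbb{Z}_p(S)$, hence algebraic over $Q$, and an algebraic extension of the fundamental perfect field $Q$ is perfect by Theorem \ref{th2.2}. For the converse, suppose $E$ is perfect. Then $E$ is a perfect field with $\mathbb{Z}_p(S) \subseteq E \subseteq \overline{E}$, so by the defining property of the perfect closure $Q$ of $\mathbb{Z}_p(S)$ in $\overline{E}$ we obtain $Q \subseteq E$. This is essentially immediate once the perfect-closure characterization is in hand.

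The step requiring the most care — really the only nontrivial point — is verifying that $Q(V)$ is a \emph{subfield} of $\overline{E}$ on which the description as a perfect closure makes sense, i.e.\ that adjoining the $\mathbb{Z}_p(S)$-basis $V$ to $Q$ inside $\overline{E}$ recovers a perfect field; more precisely, one must be sure that $Q(V)$ is algebraic over $Q$ so that Theorem \ref{th2.2} applies. This follows because $E = \mathbb{Z}_p(S)(V)$ with every $v_j$ algebraic over $\mathbb{Z}_p(S)$ (as $E/\mathbb{Z}_p(S)$ is an algebraic extension, $S$ being a transcendence base), and $\mathbb{Z}_p(S) \subseteq Q$, so each $v_j$ is algebraic over $Q$ and hence $Q(V)/Q$ is algebraic. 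The remaining verifications — that $Q(V)$ contains $E$, and the two inclusions $Q \subseteq F$ and $V \subseteq F$ forcing $Q(V) \subseteq F$ — are routine applications of Remark \ref{lm20} and the definition of perfect closure.
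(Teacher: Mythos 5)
Your proof is correct and takes essentially the same approach as the paper: $Q(V)$ is algebraic over the perfect field $Q$, hence perfect, and any perfect $F$ with $E \subseteq F \subseteq \overline{E}$ contains both $Q$ (by minimality of the perfect closure of $\mathbb{Z}_p(S)$) and $V$, so $Q(V) \subseteq F$. The only immaterial difference is that you prove the equivalence ``$E$ is perfect $\Leftrightarrow Q \subseteq E$'' directly, while the paper deduces it from the first part via ``$E$ is perfect $\Leftrightarrow E = Q(V)$.''
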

\begin{proof}  Since $V$ is  algebraic over $\mathbb{Z}_p(S),$  $Q(V)$ algebraic over $Q.$ Hence
 $Q(V)$ is perfect. Now, assume that $E \subseteq F \subseteq \overline{E}$ is perfect. Then $Q \subseteq F$ and hence
$Q(V) \subseteq F.$  So $Q(V)$ is a  perfect closure of $E.$ From this it follows that  $E$ is perfect if and only if
$E = Q(V).$  This  is  equivalent  to  $Q \subseteq E.$
\end{proof}

\noindent
Department of Mathematics\\
\noindent
Hanoi National University of Education\\
\noindent
136 Xuan Thuy street, Hanoi, Vietnam\\
\noindent
 duongquocviet@fmail.vnn.vn  and  thanhtth@hnue.edu.vn
\end{document}